\titleformat{\section}[hang]{}{\quad\large{\bf \thesection. }}{0in}{}{}
\titleformat{\subsection}[hang]{}{\quad {\bf \thesubsection. }}{0in}{}{}
\newcommand{\mcB}{\mathcal{B}}
\newcommand{\mcH}{\mathcal{H}}
\newcommand{\mcL}{\mathcal{L}}     
\newcommand{\mcN}{\mathcal{N}}
\newcommand{\mcS}{\mathcal{S}}
\newcommand{\N}{\mathbb{N}}
\renewcommand{\epsilon}{\varepsilon}
 \newtheorem{theorem}{Theorem}
 \numberwithin{theorem}{section}
 \numberwithin{theorem}{subsection}
 \newtheorem{lemma}{Lemma}
 \numberwithin{lemma}{section}
 \renewcommand*{\thetheorem}{%
   \ifnum\value{subsection}=0 %
     \thesection
   \else
     \thesubsection
   \fi
   .\arabic{theorem}%
 }
\begin{document}

\noindent\parbox[c]{4in}{\large\textsf{\bf RELIABILITY OF SERIES AND PARALLEL SYSTEMS WITH CORRELATED NODES}}
 
 \noindent 
 
 \begin{flushleft}
 {\bf Rachel Traylor$^{1}$} \\
{\em $^{1}$ University of Texas at Arlington, Arlington, Texas, United States} 
 \end{flushleft}
 
{\em $\qedsymbol$ Consider a system of $N$ components in which traffic arrives as separate but correlated nonhomogenous Poisson streams to each node rather than passing into the system at one entry point. A method is given to construct such systems mathematically and derive the survival function of the system for any logical topology. } \vspace{.25in}\\

{\bf Keywords} structure function, nonhomogeneous Poisson process, correlated traffic, system reliability 

{\bf Mathematics Subject Classification} Classify here
 \section{\large{ \bf INTRODUCTION}}
 \label{introduction to correlated nodes section}

    \begin{figure}[H]
	\centering
     \includegraphics[scale=0.5]{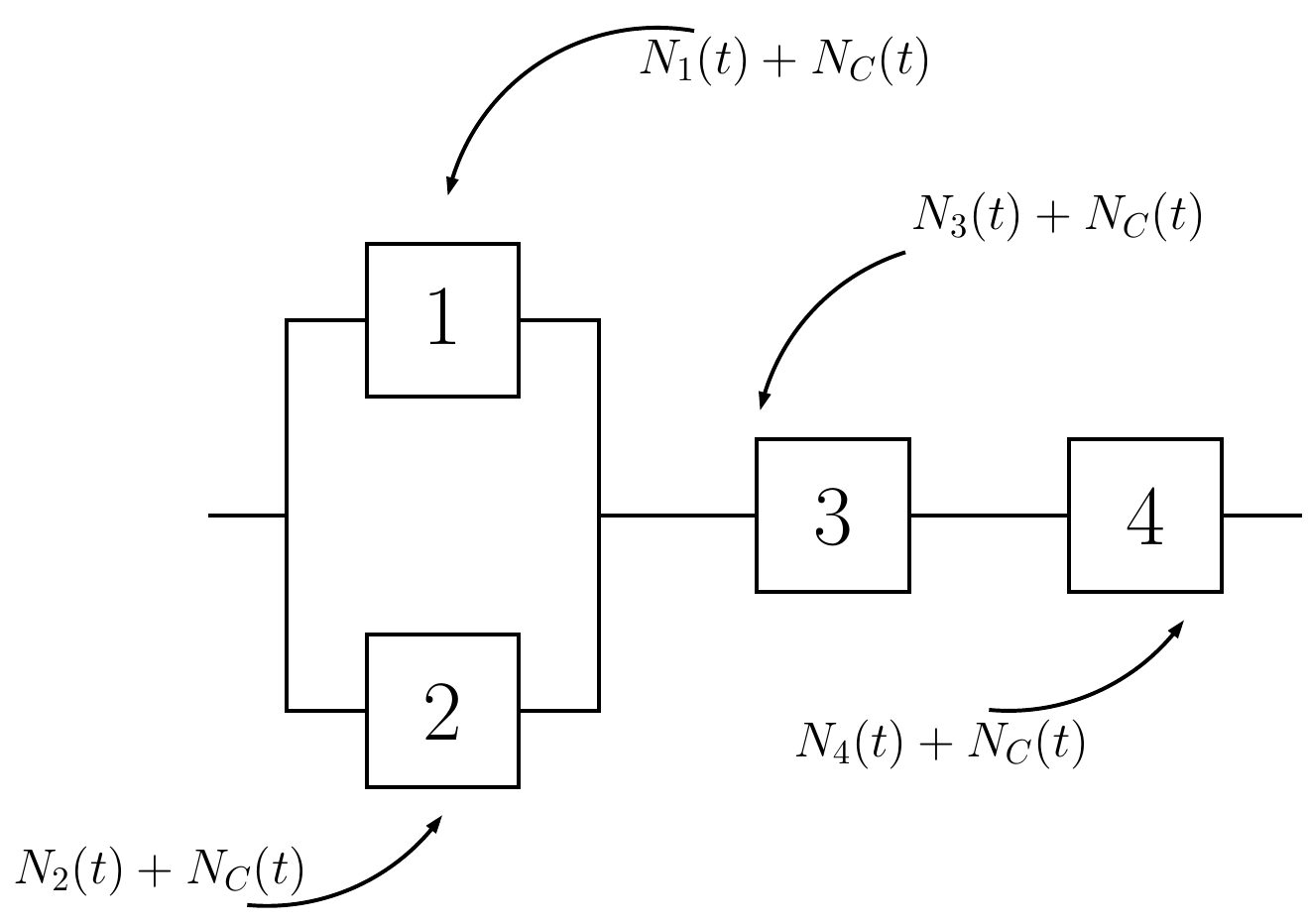} 
     \caption{Logical Topology for a Hypothetical System with Correlated Traffic Streams}
     \label{Beanstalk topology}
	 \end{figure}

\hspace{.6cm}Consider a system with multiple nodes such that the external traffic goes to each node individually. We may view many systems in retail, manufacturing, and IT this way from the perspective of system reliability. For example, instead of viewing a series of checkout registers as a $G/G/K$ queueing system, we may instead view the system as having a parallel logical topology of $K$ nodes with separate but correlated traffic streams to each node. A manufacturing system in which each manufacturing site is responsible for assembling a portion of a widget may have separate shipments of raw materials arriving to each location. \par
In each of these examples, the separate traffic streams are certainly correlated. We will model the arrivals to each node using a nonhomogeneous Poisson process, and introduce a \textit{correlator process} that will ensure all nodes are correlated but conditionally independent. \par

Let 
	$\{N_{1}(t): t \geq 0\}, \{N_{2}(t): t \geq 0\},\ldots, \{N_{K}(t): t \geq 0\}$
and a \textit{correlator process}
	$\{N_{c}(t): t \geq 0\}$ 
be mutually independent nonhomogenous Poisson processes (NHPPs) with intensities 
	$\lambda_{i}(t), i = 1,...,K$ 
and 
	$\lambda_{c}(t)$,
respectively. Now suppose there are $K$ components (or queues) in this system, denoted
	$Q_{\ell,c},  \ell = 1,...,K$
 such that the arrival processes 
    $\{\mcN_{\ell,c}(t): t \geq 0\}, \ell = 1,...,K$
are given by 
$\mcN_{\ell,c}(t) = N_{\ell}(t)+ N_{c}(t)$. \par 
By Lemma~\ref{lemma: sum of NHPP is a NHPP}, the sum of $n$ independent NHPPs remains a NHPP.
Thus, since $\{N_{\ell}\}_{\ell=1}^{K}$ and $\{N_{c}(t)\}$ are mutually independent, and 
$\mcN_{\ell}(t) = N_{\ell}(t) + N_{c}(t)$ is a NHPP, 
$E[\mcN_{\ell,c}(t)] = \lambda_{\ell}(t)+\lambda_{c}(t)$. \par
The covariance of $\mcN_{i}$ and $\mcN_{j}$ is given by 
\[Cov(\mcN_{i,c}, \mcN_{i,c})(t) = E[N_{q_{i}}N_{q_{j}}] - E[N_{q_{i}}]E[N_{q_{j}}] = \lambda_{c}(t) \nonumber			
\]
and the correlation between $\mcN_{i,c}$ and $\mcN_{j,c}$ is thus given by 
\[\rho_{\mcN_{i,c}, \mcN_{j,c}(t)} = \frac{Cov(\mcN_{i,c}, \mcN_{j,c})}{\sigma_{\mcN_{i,c}}\sigma_{\mcN_{j,c}}} = \frac{\lambda_{c}(t)}{\sqrt{(\lambda_{i}(t)+\lambda_{c}(t))(\lambda_{j}(t)+\lambda_{c}(t))}}\]

The rest of the paper is organized as follows. In Section~\ref{sec: 2 component section} we derive the survival function for a two compoment series and parallel system with correlated nodes to show the full derivation and calculation. Section~\ref{sec: K component systems} generalizes Section~\ref{sec: 2 component section}, and Section~\ref{sec: other architectures} details different examples and gives the general method for derivation of any logical system architecture.

\section{\large{ \bf SURVIVAL FUNCTION OF SYSTEM WITH TWO CORRELATED NODES}}
\label{sec: 2 component section}

In~\cite{traylor}, Traylor derives the unconditional survival function for a single server, or node under the following conditions:
\begin{enumerate}[(1)]
\item Service requests and/or shipments, henceforth known as jobs, arrive to the node via a NHPP  $\{N(t):t \geq 0\}$ with intensity function $\lambda(t)$. The arrival times are denoted $\{T_{j}\}_{j=1}^{N(t)}$.
\item Each job $j$ adds a random stress $\mcH_{j}$ to the system, increasing the breakdown rate until completion. The stresses $\{\mcH_{j}\}_{j=1}^{N(t)} \overset{i.i.d.}\sim \mcH$, where WLOG $\mcH$ is a discrete random variable with finite sample space $\{\eta_{i},...,\eta_{m}\}$ and pmf $P(\mcH = \eta_{i}) = p_{i}, i = 1,...,m$. The stresses are mutually independent of all arrival times $\{T_{j}\}_{j=1}^{N(t)}$. 
\item In addition, the jobs only add stress until completion of service.
\item The service times of each job are denoted $\{W_{j}\}_{j=1}^{N(t)} \overset{i.i.d.}\sim G_{W}(w)$ with pdf $g_{W}(w) = \frac{dG_{W}(w)}{dw}$ and are mutually independent of all stresses $\{\mcH_{j}\}_{j=1}^{N(t)}$ and service times $\{W_{j}\}_{j=1}^{N(t)}$
\end{enumerate}
We now extend~\cite{traylor} and~\cite{chalee} and derive the unconditional survival function for the following systems.

\subsection{\textbf{Series System}}
\label{2 component series}

Suppose two nodes $Q_{\ell,1}$ and $Q_{\ell,2}$ are arranged in series with arrival processes $\mcN_{1}(t) = N_{1}(t)+N_{c}(t)$ and $\mcN_{2}(t) = N_{2}(t) + N_{c}(t)$.
Let the NHPPs $\{N_{\ell}: t \geq 0\}, \ell = 1,2 \text{ and } c$ have arrival times $\{T_{j_{\ell}}\}_{j_{\ell} = 1}^{N_{\ell}(t)}$, service times $\{W_{j_{\ell}}\}_{j_{\ell}=1}^{N_{\ell}(t)}$, and stresses $\{\mcH_{j_{\ell}}\}_{j_{\ell} = 1}^{N_{\ell}(t)}$  as in~\cite{traylor}.  Assume that all stresses $\mcH_{j_{\ell}} \overset{i.i.d.}{\sim} \mcH$. In addition,  all service times regardless of node are i.i.d. with distribution $G_{W}(w)$ and pdf $g_{W}(w)$. Let the baseline breakdown rate for $Q_{\ell,c}$ be $r_{0_{\ell,c}}(t), \ell = 1,2$. Jobs in both queues add stress to the server until completion. Then for $Q_{\ell,c}$, the breakdown rate process for each node is given by 

\begin{equation}
\mcB_{\ell,c}(t) = r_{0_{\ell,c}}(t) + \sum_{j_{\ell} = 1}^{N_{\ell}(t)}\mcH_{j_{\ell}}\mathds{1}(T_{j_{\ell}} \leq t \leq T_{j_{\ell}} + W_{j_{\ell}}) + \sum_{j_{c} = 1}^{N_{c}(t)}\mcH_{j_{c}}\mathds{1}(T_{j_{c}} \leq t \leq T_{j_{c}} + W_{j_{c}})
\label{Ql_breakdownrate_process}
\end{equation}

The system survives past time $t$ if and only if both $Q_{1,c}$ and $Q_{2,c}$ survive past time $t$. Let 
	$Y_{i,c}, i = 1,2 $ 
	be the random length of the node lifetime under workload (or renewal cycle if the node can be rebooted) 
	$Q_{i,c}, i = 1,2$, 
	and 
	$Y_{S}$ 
	the system life under workload. \par 

$Q_{1,c}$ and $Q_{2,c}$ are conditionally independent under $\{N_{c}(t): t \geq 0\}, \{T_{j_{c}} = t_{j_{c}}\}_{j_{c}=1}^{N_{c}(t)}$, $\{W_{j_{c}} = w_{j_{c}}\}_{j_{c}=1}^{N_{c}(t)}$, and 
 $\{\mcH_{j_{c}} = \eta_{i_{j_{c}}}\}_{j_{c}=1}^{N_{c}(t)}$. Thus

\begin{align}
P\left(Y_{S}> t \right.&\left| N_{c}(t), \{T_{j_{c}}\}_{j_{c}=1}^{N_{c}(t)}, \{W_{j_{c}}\}_{j_{c}=1}^{N_{c}(t)}, \{\mcH_{j_{c}}\}_{j_{c}=1}^{N_{c}(t)} \right.\left.\right) \nonumber \\
		 &= P\left(Y_{1,c} > t \cap Y_{2,c} > t \left| N_{c}(t), \{T_{j_{c}}\}_{j_{c}=1}^{N_{c}(t)}, \{W_{j_{c}}\}_{j_{c}=1}^{N_{c}(t)}, \{\mcH_{j_{c}}\}_{j_{c}=1}^{N_{c}(t)}  \right.\right) \nonumber \\
		 &=\prod_{i=1}^{2} P\left(Y_{i}> t\left| N_{c}(t), \{T_{j_{c}}\}_{j_{c}=1}^{N_{c}(t)}, \{W_{j_{c}}\}_{j_{c}=1}^{N_{c}(t)}, \{\mcH_{j_{c}}\}_{j_{c}=1}^{N_{c}(t)}  \right.\right)
\label{conditional_survival_series_system_2_nodes}
\end{align}
 
By Lemma~\ref{lemma: conditional survival function for a node}, 
  \begin{align}
  P&\left(\right.Y_{\ell,c} > t \left| N_{c}(t), \{T_{j_{c}}\}_{j_{c}=1}^{N_{c}(t)}, \{W_{j_{c}}\}_{j_{c}=1}^{N_{c}(t)}, \{\mcH_{j_{c}}\}_{j_{c}=1}^{N_{c}(t)}  \right.\left.\right) \nonumber \\
  	&= \bar{F}_{0_{\ell,c}}(t)\exp\left(- \sum_{j_{c}=1}^{N_{c}(t)}\mcH_{j_{c}}\min(W_{j_{c}}, t-T_{j_{c}})\right)\exp\left(-E_{\mcH}\left[\mcH\int_{0}^{t}\exp(-\mcH w)m_{\ell}(t-w)\bar{G}_{W}(w)dw\right]\right)
  	\label{conditional survival function for a node equation}
  \end{align}
  where $\bar{F}_{0_{\ell,c}} = \exp\left(-\int_{0}^{t}r_{0}(x)dx\right)$.
  Thus from~\eqref{conditional_survival_series_system_2_nodes}, 
 \begin{align}
 P\left(Y_{s}> t \right.&\left| N_{c}(t), \{T_{j_{c}}\}_{j_{c}=1}^{N_{c}(t)}, \{W_{j_{c}}\}_{j_{c}=1}^{N_{c}(t)}, \{\mcH_{j_{c}}\}_{j_{c}=1}^{N_{c}(t)} \right.\left.\right) \nonumber \\
 &= \bar{F}_{0_{1,c}}(t)\bar{F}_{0_{2,c}}(t)\exp\left(- 2\sum_{j_{c}=1}^{N_{c}(t)}\mcH_{j_{c}}\min(W_{j_{c}}, t-T_{j_{c}})\right)\nonumber \\
 &\quad\times \exp\left(-E_{\mcH}\left[\mcH\int_{0}^{t}(m_{1}(t-w)+m_{2}(t-w))\exp(-\mcH w)\bar{G}_{W}(w)dw\right]\right)
 \label{conditional 2 comp series}
 \end{align}
 
The unconditional survival function for the two-component correlated series system is given in the following theorem.
 
 \begin{theorem}
 Let $\{N_{1}(t): t \geq 0\}, \{N_{2}(t): t \geq 0\},$ and $\{N_{c}(t): t \geq 0\}$ be independent NHPPs with intensities $\lambda_{1}(t), \lambda_{2}(t),$ and $\lambda_{c}(t)$, respectively. Suppose all arrival times $\{T_{j_{\alpha}}\}_{j_{\alpha} = 1}^{N_{\alpha}(t)}, \alpha = 1,2; c$ are independent. Let all service times $\{W_{j_{\alpha}}\}_{j_{\alpha} = 1}^{N_{\alpha}(t)}, \alpha = 1,2; c$ be i.i.d. with pdf $g_{W}(w)$ and distribution $G_{w}(w)$ and mutually independent of all arrival times. Let all stresses  $\{\mcH_{j_{\alpha}}\}_{j_{\alpha} = 1}^{N_{\alpha}(t)}, \alpha = 1,2; c$ be i.i.d. with distribution $\mcH$ as given in Condition (2), Section~\ref{sec: 2 component section} and mutually independent of arrival times and service times. Suppose we have a system of two components ($Q_{1,c}$ and $Q_{2,c}$) arranged logically in series, where each component has a  arrival process $\mcN_{i}(t) = N_{i}(t) + N_{c}(t), i = 1,2$. Then the survival function of the system $S_{Y_{s}}(t)$ is given by
 \begin{align}
 S_{Y_{s}}(t) &= \bar{F}_{0_{1}}(t)\bar{F}_{0_{2}}(t)\exp\left(-E_{\mcH}\left[\mcH\int_{0}^{t}(m_{1}(t-w)+m_{2}(t-w))\exp(-\mcH w)\bar{G}_{W}(w)dw\right] \right)\nonumber \\
 	&\hspace*{4cm}\times\exp\left(-2E_{\mcH}\left[\mcH\int_{0}^{t}\exp(-2\mcH w)m_{c}(t-w)\bar{G}_{w}(w)dw\right]\right) 
 \end{align}
 \label{thm: survival_series_correlated}
 \end{theorem}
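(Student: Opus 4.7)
The plan is to obtain the unconditional survival by taking expectations over the correlator-process randomness $\bigl\{N_c(t),\{T_{j_c}\},\{W_{j_c}\},\{\mcH_{j_c}\}\bigr\}$ on both sides of the conditional survival function displayed immediately before the theorem. In that expression the first factor $\bar{F}_{0_{1,c}}(t)\bar{F}_{0_{2,c}}(t)$ is deterministic, and the third factor $\exp\bigl(-E_{\mcH}[\mcH\int_0^t(m_1(t-w)+m_2(t-w))e^{-\mcH w}\bar{G}_W(w)dw]\bigr)$ is also nonrandom, so both pass through the expectation unchanged. The only genuine computation is therefore
\[
Z(t) \;:=\; E\!\left[\exp\!\left(-2\sum_{j_c=1}^{N_c(t)} \mcH_{j_c}\min(W_{j_c},\, t - T_{j_c})\right)\right],
\]
and the task is to identify $Z(t)$ with the second exponential in the conclusion.

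First I would condition on $N_c(t)=n$ and invoke the classical order-statistics property of a NHPP: given $N_c(t)=n$, the arrival times $T_{j_c}$ are i.i.d.\ with density $\lambda_c(u)/m_c(t)$ on $[0,t]$, and since $(\mcH_{j_c},W_{j_c})$ are i.i.d.\ across $j_c$ and independent of all arrival times, the triples $(\mcH_{j_c},W_{j_c},T_{j_c})$ are i.i.d.\ as well. The conditional expectation then factorizes as $\varphi(t)^n$ with $\varphi(t) = E_{\mcH,W,T}[e^{-2\mcH\min(W,t-T)}]$, and summing against the Poisson weights via $\sum_{n\ge 0} e^{-m_c(t)} m_c(t)^n \varphi(t)^n/n! = \exp(-m_c(t)(1-\varphi(t)))$ yields the Campbell-type expression
\[
Z(t) \;=\; \exp\!\left(-\int_0^t \lambda_c(u)\, E_{\mcH,W}\!\left[1 - e^{-2\mcH\min(W,\, t-u)}\right] du\right).
\]

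Second, I would exploit the identity $1-e^{-2\mcH\min(W,s)} = 2\mcH\int_0^s e^{-2\mcH x}\mathds{1}(W>x)\,dx$, take $E_W$ inside to replace $\mathds{1}(W>x)$ by $\bar{G}_W(x)$, and swap the order of integration (the region $\{0\le w\le t-u,\; 0\le u\le t\}$ rewrites as $\{0\le w\le t,\; 0\le u\le t-w\}$). Recognizing $\int_0^{t-w}\lambda_c(u)\,du = m_c(t-w)$ then produces
\[
Z(t) \;=\; \exp\!\left(-2\,E_{\mcH}\!\left[\mcH\int_0^t e^{-2\mcH w}\,\bar{G}_W(w)\, m_c(t-w)\, dw\right]\right),
\]
which is exactly the second exponential in the statement. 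Multiplying by the two factors that passed through the expectation and identifying $\bar{F}_{0_{i,c}}(t)=\bar{F}_{0_i}(t)$ (the notational choice made in the conclusion) completes the proof.

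The main obstacle is the first step: carefully justifying the reduction to the Campbell-type formula for $Z(t)$, since it requires simultaneously exploiting the conditional independence of $(\mcH_{j_c},W_{j_c},T_{j_c})$ across $j_c$, the NHPP order-statistics property, and the closed form of the Poisson generating series. Once that reduction is in place, the Fubini swap and the substitution yielding $m_c(t-w)$ are routine.
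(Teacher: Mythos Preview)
Your proposal is correct and follows the same approach as the paper: pull the deterministic factors through the expectation and then evaluate $E\bigl[\exp\bigl(-2\sum_{j_c}\mcH_{j_c}\min(W_{j_c},t-T_{j_c})\bigr)\bigr]$ via the NHPP order-statistics/Campbell argument. The paper simply cites the corresponding single-node computation (Theorem~3.1 of~\cite{traylor}) and observes that replacing $\mcH_{j_c}$ by $2\mcH_{j_c}$ there yields the desired factor, whereas you spell out those steps explicitly.
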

 
 \begin{proof}
As in the proof of  Theorem 3.1,~\cite{traylor},
\begin{align}
S_{Y_{s}}(t) &= E\left[E\left[ P\left(Y_{s}> t \right.\left| N_{z}(t), \{T_{j_{c}}\}_{j_{c}=1}^{N_{z}(t)}, \{W_{j_{c}}\}_{j_{c}=1}^{N_{z}(t)}, \{\mcH_{j_{c}}\}_{j_{c}=1}^{N_{z}(t)} \right.\left.\right)\left.\right]\right|N_{c}(t), \{\mcH_{j_{c}}\}_{j_{c}=1}^{N_{c}(t)}\right] \nonumber \\
&= \bar{F}_{0_{1}}(t)\bar{F}_{0_{2}}(t)\exp\left(-E_{\mcH}\left[\mcH\int_{0}^{t}(m_{1}(t-w)+m_{2}(t-w))\exp(-\mcH w)\bar{G}_{W}(w)dw\right] \right)\nonumber \\
	&\times E\left[E\left[\exp\left(- 2\sum_{j_{c}=1}^{N_{c}(t)}\mcH_{j_{c}}\min(W_{j_{c}}, t-T_{j_{c}})\left.\right)\right| N_{c}(t), \{\mcH_{j_{c}}\}_{j_{c}=1}^{N_{c}(t)}\right]\right] \nonumber 
\end{align}
We may calculate the given expectation in the exact same way as in the RSBR proof replacing $\mcH_{j_{c}}$ with $2\mcH_{j_{c}}$ to immediately see the given result.
 \end{proof}
 
\subsection{\textbf{Parallel System}}
 \label{subsec: 2 comp parallel}
 
 Now suppose we retain all the same conditions (1)-(4), but we change the component structure to a parallel system. In this case, the system fails only if both components fail. Thus, conditioning on the entire $\{N_{c}\}$ process, both $Q_{1,c}$ and $Q_{2,c}$ are now independent, and 
 
\begin{align*}
P\left(\right.Y_{s} < t &\left| N_{c}(t), \{T_{z_{j}}\}_{j_{c}=1}^{N_{c}(t)}, \{W_{j_{c}}\}_{j_{c}=1}^{N_{c}(t)}, \{\mcH_{j_{c}}\}_{j_{c}=1}^{N_{c}(t)}  \right.\left.\right) \\
 &= \prod_{\ell=1}^{2}P\left(\right.Y_{\ell,c} < t \left| N_{c}(t), \{T_{j_{c}}\}_{j_{c}=1}^{N_{c}(t)}, \{W_{j_{c}}\}_{j_{c}=1}^{N_{c}(t)}, \{\mcH_{j_{c}}\}_{j_{c}=1}^{N_{c}(t)}  \right.\left.\right) \nonumber
\end{align*}

Hence, the conditional survival function of the parallel system is given by 
\begin{align*}
P\left(\right.Y_{s} > t &\left| N_{c}(t), \{T_{j_{c}}\}_{j_{c}=1}^{N_{c}(t)}, \{W_{j_{c}}\}_{j_{c}=1}^{N_{c}(t)}, \{\mcH_{j_{c}}\}_{j_{c}=1}^{N_{c}(t)}  \right.\left.\right)   \\
&= 1-P\left(\right.Y_{s} < t \left| N_{c}(t), \{T_{z_{j}}\}_{j_{c}=1}^{N_{c}(t)}, \{W_{j_{c}}\}_{j_{c}=1}^{N_{c}(t)}, \{\mcH_{j_{c}}\}_{j_{c}=1}^{N_{c}(t)}  \right.\left.\right) \\ 
 &= \sum_{\ell=1}^{2}P\left(\right.Y_{\ell,c} > t \left| N_{c}(t), \{T_{j_{c}}\}_{j_{c}=1}^{N_{c}(t)}, \{W_{j_{c}}\}_{j_{c}=1}^{N_{c}(t)}, \{\mcH_{j_{c}}\}_{j_{c}=1}^{N_{c}(t)}  \right.\left.\right)  \\
 &\quad - \prod_{\ell=1}^{2}P\left(\right.Y_{\ell,c} > t \left| N_{c}(t), \{T_{j_{c}}\}_{j_{c}=1}^{N_{c}(t)}, \{W_{j_{c}}\}_{j_{c}=1}^{N_{c}(t)}, \{\mcH_{j_{c}}\}_{j_{c}=1}^{N_{c}(t)}  \right.\left.\right) 
\end{align*}

Using Lemma~\ref{lemma: conditional survival function for a node}, 

\begin{align}
P&\left(\right.Y_{s} > t \left| N_{c}(t), \{T_{j_{c}}\}_{j_{c}=1}^{N_{c}(t)}, \{W_{j_{c}}\}_{j_{c}=1}^{N_{c}(t)}, \{\mcH_{j_{c}}\}_{j_{c}=1}^{N_{c}(t)}  \right.\left.\right)  \nonumber \\
	&= \bar{F}_{0_{1}}(t)\exp\left(-\sum_{j_{c}=1}^{N_{c}(t)}\mcH_{j_{c}}\min(W_{j_{c}}, t-T_{j_{c}}\right)\exp\left(-E_{\mcH}[\mcH\int_{0}^{t}\exp(-\mcH w)m_{1}(t-w)\bar{G}_{W}(w)dw]\right) \nonumber \\
	&+\bar{F}_{0_{2}}(t)\exp\left(-\sum_{j_{c}=1}^{N_{c}(t)}\mcH_{j_{c}}\min(W_{j_{c}}, t-T_{j_{c}}\right)\exp\left(-E_{\mcH}[\mcH\int_{0}^{t}\exp(-\mcH w)m_{2}(t-w)\bar{G}_{W}(w)dw]\right) \nonumber \\
	&+\bar{F}_{0_{1}}(t)\bar{F}_{0_{2}}(t)\exp\left(-2\sum_{j_{c}=1}^{N_{c}(t)}\mcH_{j_{c}}
	\min(W_{j_{c}}, t-T_{j_{c}}\right)\nonumber \\
	&\hspace*{1.75in}\times\exp\left(-E_{\mcH}[\mcH\int_{0}^{t}(m_{1}(t-w)+m_{2}(t-w))\exp(-\mcH w)\bar{G}_{W}(w)dw]\right)
\end{align}

Now, we may find $S_{Y_{S}}(t)$ for the parallel system in the same manner as the series system. Then, denoting $f^{r,q}_{\mcH}(t) = q\mcH\int_{0}^{t}e^{-q\mcH w}m_{r}(t-w)\bar{G}_{w}(w)dw$, where $r = \{1,...,K,c\}$ and $q \in \N$.

\begin{align}
P(Y_{s}> t) &= \bar{F}_{0_{1}}(t)\exp\left(-E_{\mcH}[f^{1,1}_{\mcH}(t) + f^{c,1}_{\mcH}(t)]\right) + \bar{F}_{0_{2}}\exp\left(-E_{\mcH}[f^{2,1}_{\mcH}(t) + f^{c,1}_{\mcH}(t)]\right) \nonumber \\
	&\quad - \bar{F}_{0_{1}}(t)\bar{F}_{0_{2}}(t)\exp\left(-E_{\mcH}[f^{1,1}_{\mcH}(t) + f^{2,1}_{\mcH}(t)]\right)\exp\left(-E_{\mcH}\left[f^{c,2}_{\mcH}(t)\right]\right)
\end{align}

\section{\large{ \bf GENERALIZATION TO $\mathbf{K}$ COMPONENT SYSTEMS}}
\label{sec: K component systems}

The systems in Sections~\ref{2 component series} and \ref{subsec: 2 comp parallel} are generalized to $K$ components, all with the same correlator process $N_{c}$. Thus we now have components $Q_{1,c},...,Q_{K,c}$ with arrival processes $\mcN_{\ell,c}(t) = N_{\ell}(t) + N_{c}(t), \ell = 1,...,K$. 

\subsection{\textbf{Survival Function of Series System with $K$ correlated components}}
\label{subsec: K component series system}

The conditional survival function for a system with $K$ correlated nodes, all correlated by the same process $N_{c}(t)$ is a straightforward generalization of \eqref{conditional 2 comp series} and is given by 

 \begin{align}
 P\left(Y_{s}> t \right.&\left| N_{c}(t), \{T_{j_{c}}\}_{j_{c}=1}^{N_{c}(t)}, \{W_{j_{c}}\}_{j_{c}=1}^{N_{c}(t)}, \{\mcH_{j_{c}}\}_{j_{c}=1}^{N_{c}(t)} \right.\left.\right) \nonumber \\
 &= \left(\prod_{\ell = 1}^{K}\bar{F}_{\ell,c}\right)\exp\left(- K\sum_{j_{c}=1}^{N_{c}(t)}\mcH_{j_{c}}\min(W_{j_{c}}, t-T_{j_{c}})\right)\nonumber \\
 &\quad\times exp\left(-E_{\mcH}\left[\mcH\int_{0}^{t}\left(\sum_{\ell=1}^{K}m_{\ell}(t-w)\right)\exp(-\mcH w)\bar{G}_{W}(w)dw\right]\right)
 \label{conditional k comp series}
 \end{align}
 
 and thus the unconditional survival function for a series system of $K$ correlated components is given by 
 
 \begin{align}
  S_{Y_{s}}(t) &= \left(\prod_{\ell = 1}^{K}\bar{F}_{\ell,c}\right)\exp\left(-E_{\mcH}\left[\mcH\int_{0}^{t}\left(\sum_{\ell=1}^{K}m_{\ell}(t-w)\right)\exp(-\mcH w)\bar{G}_{W}(w)dw\right] \right)\nonumber \\
  	&\hspace*{4cm}\times\exp\left(-KE_{\mcH}\left[\mcH\int_{0}^{t}\exp(-K\mcH w)m_{c}(t-w)\bar{G}_{w}(w)dw\right]\right) 
  	  \label{survival_series_k_correlated}
  \end{align}
  
  \subsection{\textbf{Survival Function of Parallel System with $\mathbf{K}$ correlated components}}
  \label{subsec: K parallel components}

For a system with $K$ components in parallel, note again that the systems fails if and only if every component fails. Let $\xi_{\ell,c}(t) =  P\left(Y_{\ell,c}> t \right.\left| N_{c}(t), \{T_{j_{c}}\}_{j_{c}=1}^{N_{c}(t)}, \{W_{j_{c}}\}_{j_{c}=1}^{N_{c}(t)}, \{\mcH_{j_{c}}\}_{j_{c}=1}^{N_{c}(t)} \right.\left.\right)$. Then the conditional survival function for the $K$-component parallel system is given by 
\[P\left(Y_{S}> t \right.\left| N_{c}(t), \{T_{j_{c}}\}_{j_{c}=1}^{N_{c}(t)}, \{W_{j_{c}}\}_{j_{c}=1}^{N_{c}(t)}, \{\mcH_{j_{c}}\}_{j_{c}=1}^{N_{c}(t)} \right.\left.\right) = 1 - \prod_{\ell = 1}^{K}(1-\xi_{\ell,c}(t))\]
Denote $\mathbf{\xi} = (\xi_{1,c},...,\xi_{K,c})$, $\mathbf{1}$ as the $K-$tuple of 1's. Then
\begin{align}
1-\prod_{\ell=1}^{K}(1-\xi_{\ell,c}(t)) &= 1-\sum_{\mathbf{\nu} \leq \mathbf{1}}{\mathbf{1} \choose \mathbf{\nu}}(-1)^{\mathbf{1}-\mathbf{\nu}}\mathbf{\xi}^{\mathbf{1}-\mathbf{\nu}} \nonumber \\
		&= 1-\sum_{s_{1} = 0}^{1}\cdots\sum_{s_{K}=0}^{1}(-1)^{\scriptstyle 1- s_{1}}\xi_{\scriptscriptstyle 1,c}^{\scriptstyle 1-s_{\scriptscriptstyle 1}}\cdots(-1)^{\scriptstyle 1-s_{\scriptscriptstyle K}}\xi_{\scriptscriptstyle K,c}^{\scriptstyle 1-s_{\scriptscriptstyle K}}
\label{eq: k_parallel_cond}
\end{align}

Let $\mcS = \{ s = (s_{1},...,s_{K}): s_{\ell} = {0,1} \}$ denote all possible combinations of $s$ in the terms of~\eqref{eq: k_parallel_cond}.  Let $\mcL_{\sigma} = \{\ell : s_{\ell} = 0 \text{ in } s_{\sigma} , s_{\sigma} \in \mcS\}$. Then we may express~\eqref{eq: k_parallel_cond} in the following way:
\begin{equation}
1-\prod_{\ell=1}^{K}(1-\xi_{\ell,c}(t))= 1-\sum_{s \in \mcS}(-\mathbf{1})^{s}\xi^{s}
\end{equation} 
Using Lemma~\ref{lemma: conditional survival function for a node},

\begin{align}
\xi^{s} = \left(\prod_{\ell \in \mcL_{\sigma}}\bar{F}_{0_{\ell,c}}(t)\right)&\exp\left(-E_{\mcH}\left[\mcH\int_{0}^{t}\left(\sum_{\mcL_{\sigma}}m_{\ell(t-w)}\right)e^{-\mcH w}\bar{G}_{W}(w)dw\right]\right)\nonumber \\
&\hspace*{2in}\times\exp\left(-|\mcL_{\sigma}|\sum_{j_{c}=1}^{N_{c}(t)}\mcH_{j_{c}}\min(W_{j_{c}}, t-T_{j_{c}})\right)
\label{eq: parallel_k_prod_terms}
\end{align}

Thus, in a similar fashion to Theorem~\ref{thm: survival_series_correlated}, the survival function for a system of $K$ correlated components in parallel is given by

\begin{align}
S_{Y_{s}}(t) &= 1-\sum_{s \in \mcS}(-1)^{s}E[\xi^{s}] \nonumber \\
				&= 1-\sum_{s \in \mcS}(-1)^{s}\left(\prod_{\ell \in \mcL_{\sigma}}\bar{F}_{0_{\ell,c}}(t)\right)\exp\left(-E_{\mcH}\left[\int_{0}^{t}\bar{G}_{W}(w)\left(\mcH e^{-\mcH w}\sum_{\ell \in \mcL_{\sigma}}\left[m_{\ell}(t-w)\right] \right.\right.\right. \nonumber \\
				&\hspace*{3.5in}+ \left.\left.\left.|\mcL_{\sigma}|\mcH e^{-|\mcL_{\sigma}|\mcH w}m_{c}(t-w)\right)\right]\right)
\end{align}

\section{\large{ \bf SELECTED OTHER LOGICAL SYSTEM ARCHITECTURES AND A GENERALIZED METHOD FOR OBTAINING SYSTEM SURVIVAL FUNCTIONS}}
\label{sec: other architectures}

\hspace{.6cm}This section extends the same principle of multiple nodes with one correlator process to other selected logical system architectures. It is common in the analysis of system reliability to employ \textit{structure functions} which define the system state as a function of the component states. These structure functions give the system state (with binary assumption of \textit{working} or \textit{failed}) as a function of component states~\cite{reliability}. Denote $x_{i}$ as the state of component $i$. Then
\[x_{i} := \begin{cases}
0,& \text{ if $i$ has failed} \\
1, & \text{ if $i$ is working}
\end{cases}\]

Then the structure function of a system with $n$ components is given by 
\[\phi(\mathbf{x}) = \begin{cases}
0, &\text{ if the system has failed when in state $\mathbf{x}$} \\
1, & \text{ if the system is working when in state $\mathbf{x}$}
\end{cases}\]
where $\mathbf{x} = (x_{1},...,x_{n})$  is the state vector. \par 
As a brief example, the structure function of the series system with $K$ components is given by 
$\phi_{\text{series}}(\mathbf{x}) = \prod_{\ell=1}^{K}x_{\ell}$. Replacing the binary $x_{\ell}$ by the conditional survival function 
\[P\left(Y_{\ell,c} > t \left| N_{c}(t), \{T_{j_{c}}\}_{j_{c}=1}^{N_{c}(t)}, \{W_{j_{c}}\}_{j_{c}=1}^{N_{c}(t)}, \{\mcH_{j_{c}}\}_{j_{c}=1}^{N_{c}(t)}\right.\right)\] 
for each component given by Lemma~\ref{lemma: conditional survival function for a node}, the conditional survival function for the series system given in ~\eqref{conditional k comp series} is completely analogous to the structure function $\phi$.  \par 

The structure function for a parallel system is given by 
$\phi_{\text{parallel}}(\mathbf{x}) = 1-\prod_{\ell=1}^{K}(1-x_{\ell})$
which may be expanded into the form of ~\eqref{eq: k_parallel_cond}. Thus, using similar logic, the conditional survival function of a parallel system is analogous to the structure function of a parallel system. Therefore, for both a series and parallel system, the binary state variable $x_{\ell}$ and the conditional survival function for node $\ell$ may be viewed to be in a one-to-one correspondence of sorts. Thus, the system survival function is isomorphic to the system structure function for both series and parallel systems. Since every logical system architecture can be written either as a series system comprised of parallel subsystems or a parallel system comprised of series subsystems, we only need the structure function expressed as a linear combination of powers of $x_{\ell}: \ell = 1,...,K$ in order to obtain the system survival function. \par

In the subsequent subsections, a selection of other logical system architectures provides examples illustrating the above method.

\subsection{{\bf Bridge System}}
\label{subsec: bridge subsection}

\begin{figure}[H]
\centering
\includegraphics[scale=.25]{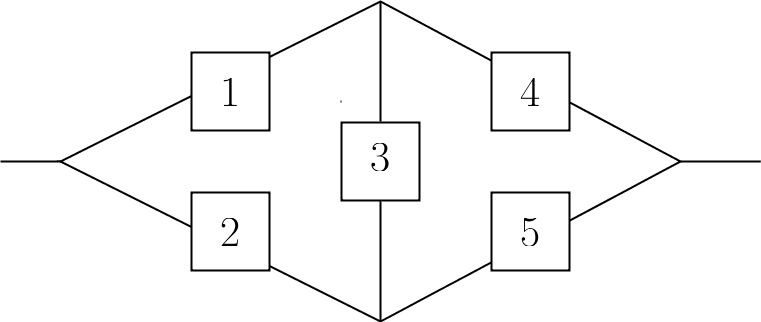}
\caption{Block Diagram of a Bridge Structure}
\label{fig: block diagram bridge structure}
\end{figure}

\hspace{.6cm}Figure~\ref{fig: block diagram bridge structure} gives the logical block diagram for a system with a bridge-style reliability. Some communications networks may use a bridge system when there are alternative ways of connecting devices such as telephones or computers. The bridge system provides many possible ways to ``complete the circuit" for relatively few components compared to a parallel system with higher reliability than a series system.\par 

 We still take each node to have its own arrival process; thus the diagram given above is logical and describes the various combinations of working components required for the system to work. It can be easily seen that the system survives past time $t$ if any one of the following sets of components all survive past $t$:
\[\{1,3,5\} \quad \{1,4\} \quad \{2,3,4\} \quad \{2,5\}\]

Thus we may give an equivalent block diagram of the bridge system using repeated components in Figure~\ref{fig: alt block diagram bridge structure}.

\begin{figure}[H]
\centering
\includegraphics[scale=.15]{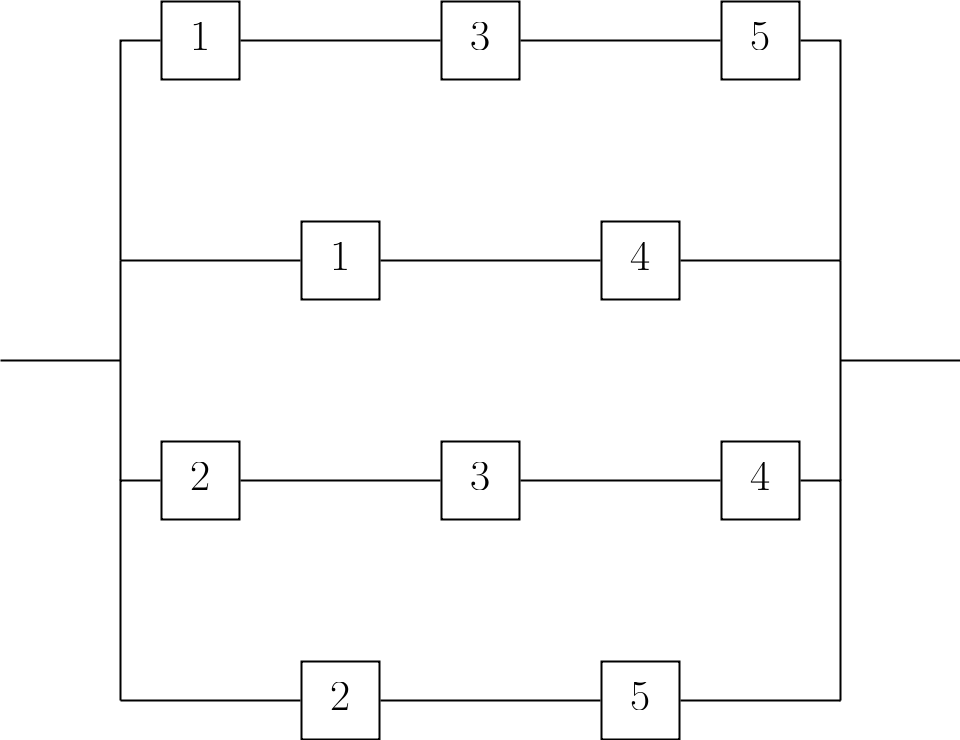}
\caption{Alternative Representation of a Bridge Structure}
\label{fig: alt block diagram bridge structure}
\end{figure}

Then the structure function may be easily derived using prior knowledge of series and parallel systems by breaking the above diagram into a parallel system of series subsystems. Therefore,
\[\phi(\mathbf{x}) = 1-(1-x_{1}x_{3}x_{5})(1-x_{1}x_{4})(1-x_{2}x_{3}x_{4})(1-x_{2}x_{5})\]

Expanding the above and replacing $x_{\ell}$ by $P(Y_{\ell}> t |N_{c}(t), \{\mcH_{j_{c}}\}, \{T_{j_{c}}\}, \{W_{j_{c}}\})$ one may derive the conditional survival function for the bridge system. Let $\mcS_{j_{c}} = \sum_{j_{c}=1}^{N_{c}(t)}\mcH_{j_{c}}\min(W_{j_{c}}, t-T_{j_{c}})$, and let 
$f^{r,q}_{\mcH}(t) = q\mcH\int_{0}^{t}e^{-q\mcH w}m_{r}(t-w)\bar{G}_{w}(w)dw$, where $r = \{1,...,5,c\}$ and $q \in \N$. Then 
\begin{align}
P(& Y_{S} > t |N_{c}(t), \{\mcH_{j_{c}}\}, \{T_{j_{c}}\}, \{W_{j_{c}}\}) \nonumber \\
		&= 		  e^{-2\mcS_{j_{c}}}\left[\bar{F}_{0_{1}}(t)\bar{F}_{0_{4}}(t)\exp\left(-E_{\mcH}\left[f_{\mcH}^{1,1}(t)+f_{\mcH}^{4,1}(t)\right]\right) \right. \nonumber \\
				&\hspace*{1in}+ \left.\bar{F}_{0_{2}}(t)\bar{F}_{0_{5}}(t)\exp\left(-E_{\mcH}\left[f_{\mcH}^{2,1}(t)+f_{\mcH}^{5,1}(t)\right]\right)\right] \nonumber \\
		& + e^{-3\mcS_{j_{c}}}\left[\bar{F}_{0_{1}}(t)\bar{F}_{0_{3}}(t)\bar{F}_{0_{5}}(t)\exp\left(-E_{\mcH}\left[f_{\mcH}^{1,1}(t)+f_{\mcH}^{3,1}(t)+f_{\mcH}^{5,1}(t)\right]\right) \right. \nonumber \\
						&\hspace*{1in}+ \left.\bar{F}_{0_{2}}(t)\bar{F}_{0_{3}}(t)\bar{F}_{0_{4}}(t)\exp\left(-E_{\mcH}\left[f_{\mcH}^{2,1}(t)+f_{\mcH}^{3,1}(t)+f_{\mcH}^{4,1}(t)\right]\right)\right]\nonumber \\
		& - e^{-4\mcS_{j_{c}}}\left[\bar{F}_{0_{1}}(t)\bar{F}_{0_{2}}(t)\bar{F}_{0_{4}}(t)\bar{F}_{0_{5}}(t)\exp\left(-E_{\mcH}\left[f_{\mcH}^{1,1}(t)+f_{\mcH}^{2,1}(t)+f_{\mcH}^{4,1}(t)+f_{\mcH}^{5,1}(t)\right]\right)\right]\nonumber \\	
		&-e^{-5\mcS_{j_{c}}}\left[\bar{F}_{0_{1}}(t)\bar{F}_{0_{3}}(t)\bar{F}_{0_{5}}(t)\exp\left(-E_{\mcH}\left[f_{\mcH}^{1,1}(t)+f_{\mcH}^{3,1}(t)+f_{\mcH}^{5,1}(t)\right]\right) \right. \nonumber \\
						&\hspace*{1in}+ \bar{F}_{0_{1}}^{2}(t)\bar{F}_{0_{3}}(t)\bar{F}_{0_{4}}(t)\bar{F}_{0_{5}}(t)\exp\left(-E_{\mcH}\left[2f_{\mcH}^{1,1}(t)+f_{\mcH}^{3,1}(t)+f_{\mcH}^{4,1}(t)+f_{\mcH}^{5,1}(t)\right]\right)\nonumber \\	 
						&\hspace*{1in}+ \bar{F}_{0_{2}}^{2}(t)\bar{F}_{0_{3}}(t)\bar{F}_{0_{4}}(t)\bar{F}_{0_{5}}(t)\exp\left(-E_{\mcH}\left[2f_{\mcH}^{2,1}(t)+f_{\mcH}^{3,1}(t)+f_{\mcH}^{4,1}(t)+f_{\mcH}^{5,1}(t)\right]\right)\nonumber \\	
						&\hspace*{1in}+ \bar{F}_{0_{4}}^{2}(t)\bar{F}_{0_{1}}(t)\bar{F}_{0_{2}}(t)\bar{F}_{0_{3}}(t)\exp\left(-E_{\mcH}\left[2f_{\mcH}^{4,1}(t)+f_{\mcH}^{1,1}(t)+f_{\mcH}^{2,1}(t)+f_{\mcH}^{3,1}(t)\right]\right)\nonumber \\
						&\hspace*{1in}+ \left.\bar{F}_{0_{5}}^{2}(t)\bar{F}_{0_{1}}(t)\bar{F}_{0_{2}}(t)\bar{F}_{0_{3}}(t)\exp\left(-E_{\mcH}\left[2f_{\mcH}^{5,1}(t)+f_{\mcH}^{1,1}(t)+f_{\mcH}^{2,1}(t)+f_{\mcH}^{3,1}(t)\right]\right)\right]\nonumber \\
		& - e^{-6\mcS_{j_{c}}}\left[\bar{F}_{0_{1}}(t)\bar{F}_{0_{2}}(t)\bar{F}_{0_{3}}^{2}(t)\bar{F}_{0_{4}}(t)\bar{F}_{0_{5}}(t)\exp\left(-E_{\mcH}\left[2f_{\mcH}^{3,1}(t)+ f_{\mcH}^{1,1}(t)+f_{\mcH}^{2,1}(t)+f_{\mcH}^{4,1}(t)+f_{\mcH}^{5,1}(t)\right]\right)\right]\nonumber \\
		& + e^{-7\mcS_{j_{c}}}\left[\bar{F}_{0_{1}}^{2}(t)\bar{F}_{0_{2}}(t)\bar{F}_{0_{3}}(t)\bar{F}_{0_{4}}(t)\bar{F}^{2}_{0_{5}}(t)\exp\left(-E_{\mcH}\left[2f_{\mcH}^{1,1}(t)+f_{\mcH}^{2,1}(t)+f_{\mcH}^{3,1}(t)+f_{\mcH}^{4,1}(t)+2f_{\mcH}^{5,1}(t)\right]\right) \right. \nonumber \\
							&\hspace*{.35in}+ \bar{F}_{0_{1}}(t)\bar{F}_{0_{2}}^{2}(t)\bar{F}_{0_{3}}(t)\bar{F}_{0_{4}}^{2}(t)\bar{F}_{0_{5}}(t)\exp\left(-E_{\mcH}\left[2f_{\mcH}^{2,1}(t)+f_{\mcH}^{1,1}(t)+f_{\mcH}^{3,1}(t)+2f_{\mcH}^{4,1}(t)+2f_{\mcH}^{5,1}(t)\right]\right)\nonumber \\	
							&\hspace*{.35in}+ \left.\bar{F}_{0_{1}}(t)\bar{F}_{0_{2}}^{2}(t)\bar{F}_{0_{3}}^{2}(t)\bar{F}_{0_{4}}(t)\bar{F}_{0_{5}}(t)\exp\left(-E_{\mcH}\left[2f_{\mcH}^{2,1}(t)+f_{\mcH}^{1,1}(t)+2f_{\mcH}^{3,1}(t)+f_{\mcH}^{4,1}(t)+2f_{\mcH}^{5,1}(t)\right]\right)\right]\nonumber \\
		& + e^{-8\mcS_{j_{c}}}\left[\bar{F}_{0_{1}}^{2}(t)\bar{F}_{0_{2}}(t)\bar{F}_{0_{3}}^{2}(t)\bar{F}_{0_{4}}^{2}(t)\bar{F}_{0_{5}}(t)\exp\left(-E_{\mcH}\left[2f_{\mcH}^{1,1}(t)+ f_{\mcH}^{2,1}(t)+2f_{\mcH}^{3,1}(t)+2f_{\mcH}^{4,1}(t)+f_{\mcH}^{5,1}(t)\right]\right)\right]\nonumber \\
		& - e^{-9\mcS_{j_{c}}}\left[\bar{F}_{0_{1}}^{2}(t)\bar{F}_{0_{2}}^{2}(t)\bar{F}_{0_{3}}^{2}(t)\bar{F}_{0_{4}}^{2}(t)\bar{F}_{0_{5}}(t)\exp\left(-E_{\mcH}\left[2f_{\mcH}^{1,1}(t)+ 2f_{\mcH}^{2,1}(t)+2f_{\mcH}^{3,1}(t)+2f_{\mcH}^{4,1}(t)+f_{\mcH}^{5,1}(t)\right]\right)\right]\nonumber \\
\label{eq: conditional survival function bridge system}
\end{align}

We may use the linearity of expectation to simply replace $e^{-q\mcS_{j_{c}}}$ by $\exp\left(E_{\mcH}\left[f_{\mcH}^{q,c}\right]\right)$ in ~\eqref{eq: conditional survival function bridge system} to obtain $S_{Y_{S}}(t)$ for the bridge system. While not completely tractable, the survival function is given in closed form and illustrates a general technique wherein we may use the expansion of a structure function in order to derive the conditional survival function for any system where all components are correlated by one process $\{N_{c}(t)\}$. This conditional survival function will always be linear in $e^{-q\mcS_{j_{c}}}$, and thus the unconditional survival function may be easily obtained using the already well-developed method. We illustrate this with another example.

\subsection{{\bf$k$-of-$n$ System}}
\label{ k of n subsection}

The $k$-of-$n$ system has $n$ nodes in parallel of which $k$ must be functioning in order for the system to remain functioning. This is a generalization of the series system ($n$-of-$n$) and the parallel system ($1$-of-$n$).  The structure function of a $k$-of-$n$ system is given by 

\begin{equation}
\phi(\mathbf{x}) = 1-\prod\limits_{\tiny\substack{ \ell_{j} = 1; \\j = 1,...,k\\ \ell_{1} < ...< \ell_{k}}}^{n}\left(1-\prod_{j=1}^{k}x_{\ell_{j}}\right)
\label{k of n structure function}
\end{equation}

By replacing $\ell_{j}$ with the appropriate conditional survival function for component $\ell$ and expanding~\eqref{k of n structure function}, we may again arrive at the system survival function conditioned upon $\{N_{c}(t): t \geq 0\}$. Because this system may also be expressed as a parallel system of series subsystems, the conditional survival function will again be linear in $e^{-q\mcS_{j_{c}}}$ and thus the linearity of expectation allows for straightfoward computation of the system survival function. 

\subsubsection{Example: 2-of-3 system}
\label{2 of 3 subsubsection}

\begin{figure}[H]
\centering
\includegraphics[scale=.25]{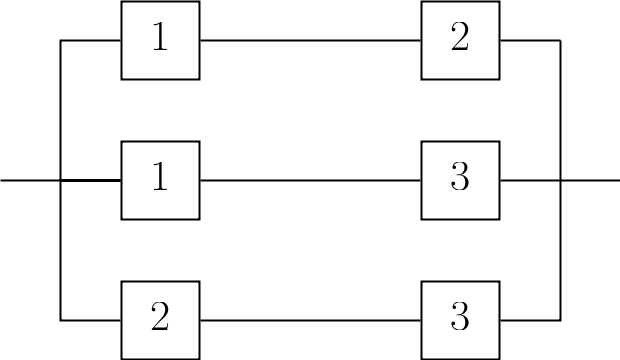}
\caption{Block Diagram of 2-of-3 System}
\label{2 of 3 structure}
\end{figure}

Figure~\ref{2 of 3 structure} gives the logical block diagram for a 2-of-3 system. Thus, using ~\eqref{k of n structure function}, the structure function for the 2-of-3 system is given by 

\begin{align}
\phi(\mathbf{x}) &= 1-(1-x_{1}x_{2})(1-x_{1}x_{3})(1-x_{2}x_{3}) \nonumber \\
						&= \sum_{i=1}^{2}\sum_{j=i+1}^{3}x_{i}x_{j} - \sum\limits_{\substack{i=1\\j,k\neq i}}^{3}x_{i}^{2}x_{j}x_{k} + \prod_{i=1}^{3}x_{i}^{2}
\end{align}

Using the technique described in Section~\ref{subsec: bridge subsection}, one may arrive at the unconditional survival function of the 2-of-3 system:

\begin{align}
S_{Y_{S}}(t) &= \exp\left(-E_{\mcH}[f_{\mcH}^{c,2}(t)]\right)\left[\sum_{i=1}^{2}\sum_{j=i+1}^{3}\bar{F}_{0_{i}}(t)\bar{F}_{0_{j}}(t)\exp\left(-E_{\mcH}[f_{\mcH}^{i,1}(t)+f_{\mcH}^{j,1}(t)]\right)\right] \nonumber \\
	&\quad \exp\left(-E_{\mcH}[f_{\mcH}^{c,4}(t)]\right)\left[\sum\limits_{\substack{i=1\\j,k\neq i}}^{3}\bar{F}_{0_{i}}^{2}(t)\bar{F}_{0_{j}}(t)\bar{F}_{0_{k}}(t)\exp\left(-E_{\mcH}\left[2f_{\mcH}^{i,1}(t)+f_{\mcH}^{j,1}(t)+f_{\mcH}^{k,1}(t)\right]\right)\right] \nonumber \\
	&\quad +\exp\left(-E_{\mcH}[f_{\mcH}^{c,6}(t)]\right)\left(\prod_{\ell=1}^{3}\bar{F}_{0_{\ell}}^{2}(t)\right)\exp\left(-E_{\mcH}\left[2\sum_{\ell=1}^{3}f_{\mcH}^{\ell,1}(t)\right]\right) 
\end{align}

\section{\large{ \bf CONCLUSION}}
\label{sec: conclusion}

\hspace*{.6cm} In~\cite{traylor}, Traylor derived the survival function for a single server under workload where the arrivals may be described by a NHPP and the workload stresses are random. We created a logical system of such servers correlated by a nonhomogenous ``correlator" Poisson process; thus each node is conditionally independent. The conditional survival function for each server (or node) is found to be in one-to-one correspondence with the binary state (operational or failed) $x_{\ell}$, and thus the system structure function is isomorphic to the system's conditional survival function. This gives a straightforward method to compute the conditional survival function of the correlated system. All structure functions may be derived using series and parallel structure functions, and both series and parallel structure functions are linear in $e^{-q\mcS_{j_{c}}}$, where $\mcS_{j_{c}} =   \sum_{j_{c}=1}^{N_{c}(t)}\mcH_{j_{c}}\min(W_{j_{c}}, t-T_{j_{c}})$. The unconditional survival function is calculated by taking the expectation over $N_{c}(t)$, and thus the linearity of expectation allows for a straightfoward computation of the system survival function. \par 
This method allows for the modeling of more complex logical system architectures without simplification of the arrival process or the assumption of node independence.  
\appendix
 \section{\large{ \bf APPENDIX}}
 \label{appendix}
 
 \begin{lemma}
 Let $\{N_{i}(t) : t \geq 0\}_{i=1}^{n}$ be independent nonhomogeneous Poisson processes with intensities $\lambda_{i}(t), i =1,...,n$. Let $N(t) = \sum_{i=1}^{n}N_{i}(t)$ is a nonhomogeneous Poisson process with intensity $\lambda(t) = \sum_{i=1}^{n}\lambda_{i}(t)$
 \label{lemma: sum of NHPP is a NHPP}
 \end{lemma}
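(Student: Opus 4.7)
The plan is to verify directly that $N(t) := \sum_{i=1}^{n} N_i(t)$ satisfies the three standard defining properties of a nonhomogeneous Poisson process with mean function $m(t) = \int_0^t \lambda(u)\,du = \sum_{i=1}^n \int_0^t \lambda_i(u)\,du$: namely (i) $N(0) = 0$, (ii) $N$ has independent increments, and (iii) for $0 \le s < t$, the increment $N(t) - N(s)$ is Poisson distributed with parameter $m(t) - m(s)$.

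Property (i) is immediate since each $N_i(0) = 0$. For (ii), I fix disjoint intervals $(s_1, t_1], \ldots, (s_k, t_k]$. Each individual process $N_i$ has independent increments, so for each fixed $i$ the $k$ random variables $\{N_i(t_j) - N_i(s_j)\}_{j=1}^{k}$ are independent; combining this with the mutual independence of the $N_i$'s across $i$, the entire $nk$-dimensional family is jointly independent. Since each increment $N(t_j) - N(s_j) = \sum_{i=1}^{n} (N_i(t_j) - N_i(s_j))$ is a measurable function of a distinct block of these variables, the $k$ increments of $N$ are independent.

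For (iii), I would invoke the standard fact that a finite sum of independent Poisson variables is again Poisson with summed parameter. Since $N_i(t) - N_i(s) \sim \mathrm{Poisson}(m_i(t) - m_i(s))$ by the defining property of each NHPP, and these increments are independent across $i$, one concludes $N(t) - N(s) \sim \mathrm{Poisson}(m(t) - m(s))$. The cleanest verification uses probability generating functions:
\[
E\!\left[z^{N(t)-N(s)}\right] = \prod_{i=1}^{n} E\!\left[z^{N_i(t) - N_i(s)}\right] = \prod_{i=1}^{n} e^{(m_i(t)-m_i(s))(z-1)} = e^{(m(t)-m(s))(z-1)},
\]
which is precisely the PGF of $\mathrm{Poisson}(m(t)-m(s))$.

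There is essentially no substantive obstacle here; the result is folklore. The only bookkeeping point requiring minor care is step (ii), where one must be explicit that independence of individual-process increments translates, via mutual independence across $i$ and the measurability of coordinatewise summation, into independence of the composite $N$-increments. If one preferred to work from the infinitesimal-rate definition $P(N_i(t+h) - N_i(t) = 1) = \lambda_i(t) h + o(h)$, $P(N_i(t+h) - N_i(t) \ge 2) = o(h)$, then the only subtlety would be controlling the $o(h)$ terms uniformly across the finitely many independent processes when bounding $P(\text{two or more jumps of }N\text{ in }(t, t+h])$, which a union bound handles in one line.
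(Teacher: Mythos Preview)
Your proof is correct. It differs from the paper's in two ways. First, the paper proceeds by induction on $n$, treating $n=2$ as the base case and then passing from $k$ to $k+1$, whereas you handle arbitrary $n$ in one stroke. Second, for the distribution of increments the paper writes out the discrete convolution
\[
P(N(t+s)-N(t)=n)=\sum_{x=0}^{n}P(N_{1}(t+s)-N_{1}(t)=x)\,P(N_{2}(t+s)-N_{2}(t)=n-x)
\]
and collapses it with the binomial theorem, while you use the probability generating function identity $E[z^{\sum X_i}]=\prod E[z^{X_i}]$. Both routes are standard; yours is slightly slicker and avoids the inductive scaffolding. One point in your favor: you explicitly verify independent increments (your step (ii)), which the paper's proof does not address at all---it checks only $N(0)=0$ and the Poisson marginal of a single increment. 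So your argument is in fact more complete.
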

 
 \begin{proof}
 The proof will proceed via induction. As a base case, let $n=2$. It suffices to show that $N(0) = 0$, and 
 \[P(N(t+s) - N(t) = n) = \dfrac{\exp\left(-(m(t+s) - m(t))\right)[m(t+s)-m(t)]^{n}}{n!}\]
 where $m(t) = m_{1}(t)+m_{2}(t)$ and $m_{i}(t) = \int_{0}^{t}\lambda_{i}(s)ds$.
 Clearly, $N(0) = N_{1}(0) + N_{2}(0) = 0+0 = 0$. Now, since $\{N_{1}(t)\}, \{N_{2}(t)\}$ are independent, we may find the distribution of $\{N(t)\}$ via convolution. Thus,
 	\begin{align}
 	P(N(t+s)&-N(t)=n) \nonumber \\
 	&= P((N_{1}+N_{2})(t+s) - (N_{1}+N_{2})(t)) = n) \nonumber \\
 							  &= P(N_{1}(t+s) + N_{2}(t+s) - N_{1}(t) -N_{2}(t) = n) \nonumber \\
 							  &= \sum_{x=0}^{n}P([N_{1}(t+s) - N_{1}(t) = x] \cap [N_{2}(t+s) - N_{2}(t)= n-x]) \nonumber \\
 							  &= \sum_{x=0}^{n}P(N_{1}(t+s)-N_{1}(t) = x)P(N_{2}(t+s)-N_{2}(t) = n-x) \nonumber \\
 							  &= \sum_{x=0}^{n}\frac{(m_{1}(t+s) - m_{1}(t))^{x}e^{-(m_{1}(t+s)-m_{1}(t))}}{x!}
 							  		\frac{(m_{2}(t-s) - m_{2}(t))^{n-x}e^{-(m_{2}(t+s)-m_{2}(t))}}{(n-x)!} \nonumber \\
 							  &= \frac{1}{n!}e^{-(m_{1}+m_{2})(t+s) - (m_{1}+m_{2})(t)}\sum_{x=0}^{n}\frac{n!}{x!(n-x)!}(m_{1}(t+s) - m_{1}(t))^{x}(m_{2}(t+s) - m_{2}(t))^{n-x}\nonumber \\
 							  &= \frac{1}{n!}e^{-(m_{1}+m_{2})(t+s) - (m_{1}+m_{2})(t)}(m_{1}(t+s)-m_{1}(t) + m_{2}(t+s)-m_{2}(t))^{n} \nonumber \\
 							  &= \frac{e^{-(m_{1}+m_{2})(t+s) - (m_{1}+m_{2})(t)}}{n!}((m_{1}+m_{2})(t+s) - (m_{1}+m_{2})(t))^{n} \nonumber \\
 							  &= \frac{e^{-(\lambda_{1}+\lambda_{2})(s)}(\lambda_{1}+\lambda_{2})(s))^{n}}{n!} \nonumber \\
 	\end{align}
 Now, assume that $N_{\kappa}(t) = \sum_{i=1}^{k}N_{i}(t)$ is a NHPP with intensity $\lambda(t) = \sum_{i=1}^{k}\lambda_{i}(t)$. Then let $N(t) = N_{\kappa}(t) + N_{k+1}(t)$, where $\{N_{k+1}(t)\}$ is a NHPP with intensity $\lambda_{k+1}(t)$. Then using the same procedure as above, we see that 
 \[P(N(t+s)-N(t) = n) = \frac{e^{-\left(\sum_{i=1}^{k+1}\lambda_{i}\right)(s)}(\left(\sum_{i=1}^{k+1}\lambda_{i}\right)(s))^{n}}{n!}\]
 and thus the sum of nonhomogeneous Poisson processes remains a NHPP.
 \end{proof}
 
  \begin{lemma}
  Under the condition that $N_{c}(t) =n_{c}, \mcH_{j_{c}} = \eta_{i_{j_{c}}}, i \in \{1,...,m\}, j_{c} = 1,...,n_{c}$, we have that
  \begin{align}
  P\left(\right.Y_{\ell,c} > t &\left| N_{c}(t), \{T_{j_{c}}\}_{j_{c}=1}^{N_{c}(t)}, \{W_{j_{c}}\}_{j_{c}=1}^{N_{c}(t)}, \{\mcH_{j_{c}}\}_{j_{c}=1}^{N_{c}(t)}  \right.\left.\right) \nonumber \\
  	&= \bar{F}_{0_{\ell,c}}\exp\left(- \sum_{j_{c}=1}^{N_{c}(t)}\mcH_{j_{c}}\min(W_{j_{c}}, t-T_{j_{c}})\right)\exp\left(-E_{\mcH}\left[\mcH\int_{0}^{t}\exp(-\mcH w)m_{\ell}(t-w)\bar{G}_{W}(w)dw\right]\right) \nonumber
  \end{align}
  where $m_{\ell}(x) = \int_{0}^{x}\lambda_{\ell}(s)ds$.
  \label{lemma: conditional survival function for a node}
  \end{lemma}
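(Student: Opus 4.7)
The plan is to start from the observation that, conditional on every source of randomness (both node $\ell$ and correlator), the survival probability of a node with breakdown--rate process $\mcB_{\ell,c}(s)$ is exactly $\exp(-\int_{0}^{t}\mcB_{\ell,c}(s)\,ds)$. Using the tower property, the quantity in the lemma equals the expectation of this exponential over the node-$\ell$ randomness, taken with the correlator data $N_{c}(t), \{T_{j_c}\}, \{W_{j_c}\}, \{\mcH_{j_c}\}$ held fixed. Because $\{N_{\ell}\}$, its marks $\{W_{j_\ell}\},\{\mcH_{j_\ell}\}$, and the correlator data are mutually independent, this conditional expectation factors cleanly into three pieces after integrating $\mcB_{\ell,c}$ over $[0,t]$.

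First I would apply Fubini to each indicator sum in \eqref{Ql_breakdownrate_process} to rewrite
\[
\int_{0}^{t}\mcB_{\ell,c}(s)\,ds = \int_{0}^{t}r_{0_{\ell,c}}(s)\,ds + \sum_{j_\ell=1}^{N_\ell(t)}\mcH_{j_\ell}\min(W_{j_\ell},t-T_{j_\ell}) + \sum_{j_c=1}^{N_c(t)}\mcH_{j_c}\min(W_{j_c},t-T_{j_c}),
\]
since each job of duration $W_j$ starting at $T_j$ contributes stress for exactly $\min(W_j,t-T_j)$ units of time in $[0,t]$. The baseline integral yields the deterministic factor $\bar{F}_{0_{\ell,c}}(t)$; the correlator sum is measurable with respect to the conditioning $\sigma$-algebra and pulls out as $\exp(-\sum_{j_c}\mcH_{j_c}\min(W_{j_c},t-T_{j_c}))$; and what remains is the expectation of $\exp(-\sum_{j_\ell}\mcH_{j_\ell}\min(W_{j_\ell},t-T_{j_\ell}))$ over the marked NHPP associated with node $\ell$.

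The main obstacle is to evaluate this last expectation in closed form. I would do so via the Laplace functional for marked NHPPs, which gives
\[
E\!\left[\exp\!\left(-\sum_{j_\ell=1}^{N_\ell(t)} f(T_{j_\ell},W_{j_\ell},\mcH_{j_\ell})\right)\right] = \exp\!\left(-\int_{0}^{t}\bigl(1 - E[e^{-f(u,W,\mcH)}]\bigr)\lambda_\ell(u)\,du\right),
\]
with $f(T,W,\mcH)=\mcH\min(W,t-T)$. Writing $1-e^{-\mcH\min(W,w)} = \mcH\int_{0}^{w}e^{-\mcH s}\mathds{1}(s<W)\,ds$, using independence of $W$ and $\mcH$, and applying Fubini to swap the $s$ and $w$ integrals with the substitution $v=t-w$ to identify $\int_{s}^{t}\lambda_\ell(t-w)\,dw = m_\ell(t-s)$, one arrives at
\[
\int_{0}^{t}\bigl(1-E[e^{-\mcH\min(W,w)}]\bigr)\lambda_\ell(t-w)\,dw = E_{\mcH}\!\left[\mcH\int_{0}^{t}e^{-\mcH w}m_\ell(t-w)\bar{G}_W(w)\,dw\right].
\]
Combining the three factors gives the stated formula. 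Practitioners who prefer to avoid Laplace functionals can instead condition on $N_\ell(t)=n$, exploit the order-statistic representation of NHPP arrival times with density $\lambda_\ell(u)/m_\ell(t)$ on $[0,t]$, and then sum the resulting Poisson series; the same computation as in Traylor~\cite{traylor} carries through verbatim, with $N_\ell$ and $\lambda_\ell$ in place of the single-server process.
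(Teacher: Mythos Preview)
Your proposal is correct and follows exactly the same decomposition as the paper: condition on all randomness to get $\exp(-\int_0^t\mcB_{\ell,c})$, split the integral into baseline, node-$\ell$, and correlator sums, pull out the correlator piece and $\bar{F}_{0_{\ell,c}}$, and then average over the node-$\ell$ marked NHPP. The only difference is that where the paper simply writes ``this case reduces to the previous RSBR case'' and cites \cite{traylor} for the remaining expectation, you actually carry out that computation via the Laplace functional (and mention the order-statistic route of \cite{traylor} as the alternative); this is more explicit than the paper but not a different argument.
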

  
  \begin{proof}
  As before (\cite{traylor}), we have that 
  \begin{align}
 P\left(\right.Y_{\ell} > t &\left| N_{c}(t), \{T_{j_{c}}\}_{j_{c}=1}^{N_{c}(t)}, \{W_{j_{c}}\}_{j_{c}=1}^{N_{c}(t)}, \{\mcH_{j_{c}}\}_{j_{c}=1}^{N_{c}(t)}, N_{\ell}(t), \{T_{j_{\ell}}\}_{j_{\ell}=1}^{N_{\ell}(t)}, \{W_{j_{\ell}}\}_{j_{\ell}=1}^{N_{\ell}(t)}, \{\mcH_{j_{\ell}}\}_{j_{\ell}=1}^{N_{\ell}(t)}  \right.\left.\right) \nonumber \\
 	&= \exp\left(-\int_{0}^{t}\mcB_{\ell}(t)\right) \nonumber \\
 	&= \bar{F}_{0_{1}}(t)\exp\left(-\sum_{j_{\ell}=1}^{N_{\ell}(t)}\mcH_{j_{\ell}}\min(W_{j_{\ell}}, t-T_{j_{\ell}}) - \sum_{j_{c}=1}^{N_{c}(t)}\mcH_{j_{c}}\min(W_{j_{c}}, t-T_{j_{c}})\right) \nonumber \\
 	&= \bar{F}_{0_{1}}(t)\exp\left(-\sum_{j_{\ell}=1}^{N_{\ell}(t)}\mcH_{j_{\ell}}\min(W_{j_{\ell}}, t-T_{j_{\ell}}) \right)\exp\left(- \sum_{j_{c}=1}^{N_{c}(t)}\mcH_{j_{c}}\min(W_{j_{c}}, t-T_{j_{c}})\right)
  \end{align}
  
  Now, 
  \begin{align}
   P\left(\right.Y_{\ell,c} > t &\left| N_{c}(t), \{T_{j_{c}}\}_{j_{c}=1}^{N_{c}(t)}, \{W_{j_{c}}\}_{j_{c}=1}^{N_{c}(t)}, \{\mcH_{j_{c}}\}_{j_{c}=1}^{N_{c}(t)}  \right.\left.\right) \nonumber \\ 
   &= E_{N_{\ell},\{\mcH_{j_{\ell}}\}}\left[\bar{F}_{0_{\ell,c}}(t)\exp\left(-\sum_{j_{\ell}=1}^{N_{\ell}(t)}\mcH_{j_{\ell}}\min(W_{j_{\ell}}, t-T_{j_{\ell}}) \right)\exp\left(- \sum_{j_{c}=1}^{N_{c}(t)}\mcH_{j_{c}}\min(W_{j_{c}}, t-T_{j_{c}})\right)\right] \nonumber \\
   &= \bar{F}_{0_{\ell,c}}\exp\left(- \sum_{j_{c}=1}^{N_{c}(t)}\mcH_{j_{c}}\min(W_{j_{c}}, t-T_{j_{c}})\right)E_{N_{\ell},\{\mcH_{j_{\ell}}\}}\left[\exp\left(-\sum_{j_{\ell}=1}^{N_{\ell}(t)}\mcH_{j_{\ell}}\min(W_{j_{\ell}}, t-T_{j_{\ell}}) \right)\right] \nonumber \\
   \intertext{But this case reduces to the previous RSBR case, and hence we have }
   &= \bar{F}_{0_{\ell,c}}\exp\left(- \sum_{j_{c}=1}^{N_{c}(t)}\mcH_{j_{c}}\min(W_{j_{c}}, t-T_{j_{c}})\right)\exp\left(-E_{\mcH}\left[\mcH\int_{0}^{t}\exp(-\mcH w)m(t-w)\bar{G}_{W}(w)dw\right]\right) \nonumber 
  \end{align}
  \end{proof}
  
  \bibliographystyle{acm}
  \bibliography{NHPP_corr}

 \end{document}